\def\ph0{\phi_0}
\newcommand{\blackboardbold}{\mathbb}
\newcommand{\BR}{\ensuremath{\blackboardbold{R}}}
\newcommand{\Z}{\ensuremath{\blackboardbold{Z}}}
\newcommand{\ovfrac}[2]{{#1}/{#2}}
\renewcommand{\ge}{\geqslant}
\renewcommand{\le}{\leqslant}
\newcommand{\abs}[1]{\lvert #1\rvert}
\newcommand{\ovdiff}[2]{\ovfrac{\mathrm{d}#1}{\mathrm{d}#2}}
\newtheorem{lemma}{Lemma}
\newtheorem{thm}[lemma]{Theorem}
\title{Uncountably many cases of Filippov's sewed focus}
\author{Paul Glendinning\thanks{School of Mathematics, University of Manchester, Manchester M13 9PL.}, S.~John Hogan\thanks{Corresponding author: \href{mailto:s.j.hogan@bristol.ac.uk}{s.j.hogan@bristol.ac.uk}},  Martin Homer,\\ Mike R. Jeffrey, Robert Szalai\thanks{Department of Engineering Mathematics, University of Bristol, Bristol BS8 1TW.}}
\begin{document}

\maketitle

\begin{abstract}
The sewed focus is one of the singularities of planar piecewise smooth dynamical systems. Defined by Filippov in his book {\it Differential Equations with Discontinuous Righthand Sides} (Kluwer, 1988), it consists of two invisible tangencies either side of the switching manifold. 
%While the analytic case was comprehensively studied by Filippov, the aim of this paper is to clarify and extend the understanding of the non-analytic cases. 
In the case of analytic focus-like behaviour, Filippov showed that the approach to the singularity is in infinite time. 

For the case of non-analytic focus-like behaviour, we show that the approach to the singularity can be in finite time. 
For the non-analytic sewed centre-focus, we show that there are uncountably many different topological types of local dynamics, including cases with  infinitely many stable periodic orbits, and show how to create systems with periodic orbits intersecting any bounded symmetric closed set.
\end{abstract}

%\maketitle

\section{Introduction}
\label{section:intro}
Piecewise smooth systems have a wide range of applications in mechanics, engineering, control theory and biology \cite{dBBK, Jeffrey}. Many of the fundamental results on these systems are collected in Filippov's seminal book \cite{f88}.  Some of his results have been partly rediscovered (see \cite{Hog2015} for a case study from bifurcation theory). Other results are not well-known, or are themselves incomplete.

A general planar piecewise smooth differential equation  with switching manifold $\Sigma: y=0$ is given by
\begin{equation}\label{basicmodel}
\begin{array}{lll}
\dot x = P^+(x,y), &\dot y=Q^+(x,y), &\text{if $y>0$},\\
\dot x = P^-(x,y), &\dot y=Q^-(x,y), &\text{if $y<0$},
\end{array}\end{equation}
where the functions $P^\pm$ and $Q^\pm$ can be smoothly extended to a neighbourhood of $\Sigma$. We say that \eqref{basicmodel} is $C^k_*$ if $P^\pm, Q^\pm$ are $C^k$, $k\in \Z^+ \cup \{\infty\}$, and $C^\omega_*$ if they are analytic.\footnote{Note that Filippov \cite[p.~206]{f88} uses a more general definition: systems are said to be $C^k_*$ if $P^\pm$ and $Q^\pm$ are $C^k$ \textit{and} if $\Sigma$ is $C^{k+1}$, fixed and the same for all systems in the class.}

Filippov \cite[p.~220]{f88} describes a classification of singularities of \eqref{basicmodel} on $\Sigma$. Assuming that the singularity is at the origin $\text{O}:=(0,0)$, \textit{type 3 singularities} occur if
\begin{equation}\label{Type3}
P^\pm(0,0)\ne 0, \quad Q^+(0,0)=Q^-(0,0)=0.
\end{equation}
One type 3 singularity is the {\it sewed focus}\footnote{ Also called a \textit{fused}, \textit{merged} or \textit{stitched} focus.}, which is made up of two invisible tangencies either side of the switching manifold \cite[p.~234]{f88}. Locally, trajectories in $y>0$ are inverted parabolas and in $y<0$ they are parabolas, creating a flow that can be focus-like (see Figure~\ref{fig:Fil80}) or centre-like. The conditions for a sewed focus are, up to a change in the direction of time,
\begin{equation}\label{sf1}
P^+(0,0)=b^+>0, \quad P^-(0,0)=b^-<0,
\end{equation}
and
\begin{equation}\label{sf2}
xQ^\pm(x,0)<0\quad {\rm if}~x\ne 0
\end{equation}
for $x$ in some neighbourhood $V$ of $x=0$. The simplest sufficient condition to satisfy \eqref{sf2} locally is 
\begin{equation}\label{sf2strong}
Q^+_x(0,0)<0, \quad Q^-_x(0,0)<0.
\end{equation}
Note that \eqref{sf2} and \eqref{sf2strong} each imply that $\dot y$ has the same sign at $(x,0)$ for all $x\ne 0$ on both sides of $y=0$. Hence \eqref{basicmodel} admits unique solutions passing through $\Sigma$. There is no sliding\footnote{The vanishing of all derivatives with respect to $x$ up to order $k-1$ and such that
$\ovfrac{\partial^k Q^\pm}{\partial x^k}<0$
also defines a sewed focus.} and hence no Filippov vector field. 
\begin{figure}
\begin{center}
\begin{overpic}[angle=90, height=0.40\textheight]{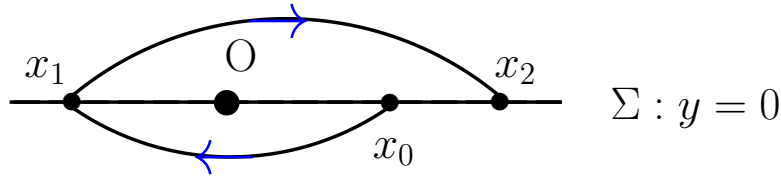}
\put(106,44.8){\Large{$\Sigma: y=0$}}
\put(37,44.7){\Huge{$\bullet$}}
\put(39.5,53){\Large{O}}
\put(33.7,35.3){{\color{blue}\Huge{$\leftarrow$}}}
\put(43,58.8){{\color{blue}\Huge{$\rightarrow$}}}
\put(5,52){\Large{$x_1$}}
\put(11.2,45.7){\Large{$\bullet$}}
\put(86,52){\Large{$x_2$}}
\put(85,45.7){\Large{$\bullet$}}
\put(65,38){\Large{$x_0$}}
\put(66,45.5){\Large{$\bullet$}}
\end{overpic}
 \end{center}
 \vspace{-3cm}
\caption{Flow around a sewed focus at the origin O, formed from two invisible tangencies, with switching manifold $\Sigma: y=0$, and $x_{0,2}>0$, $x_1<0$. Reproduction of \cite[Figure 80, p. 234]{f88}.}
\label{fig:Fil80}
\end{figure}

If \eqref{basicmodel} is $C^\omega_*$, there are just two cases \cite{f88} up to time reversal: the singularity is either i) a stable (unstable) focus, locally solutions tend to $(0,0)$ as $t\to \infty$ ($t\to-\infty$), and solutions reach $(0,0)$ in infinite time or ii) a centre and all solutions except the singularity are periodic locally. Although these cases are given in \cite{f88}, one aim of this paper is to clarify the result and simplify the proof (see Theorem~\ref{thm:analytic} below). 

Another aim is to gain an understanding of the non-analytic cases, which were not fully considered in \cite{f88}. In the case of focus-like behaviour we show that \emph{if the system is $C^k_*$, $k\in \Z^+ \cup \{\infty\}$, then the approach to the singularity can be in finite time} (see Theorem~\ref{thm:finitetime} below). 

We also consider more general sewed centre-focus\footnote{The centre-focus in smooth systems is well-defined \cite[p. 139]{Perko}.} behaviour, which was not considered at all\footnote{``[T]he cases of [sewed] centre-focus are not considered here'' \cite[p.~222]{f88}.} in \cite{f88}. In particular we show that there are uncountably many different local structures\footnote{In fact, Filippov appears to have been aware of the complexity possible in this case, stating that ``...there are infinitely many [sewed] [centre-foci]'' \cite[p.~250]{f88}. But he offers no proof of this statement. The absence of any proof in the literature led us to results that follow.} near this singularity (see Theorem~\ref{thm:remarkable} below).  

Our paper is organized as follows. In section~\ref{section:classic} we review the theoretical framework and results attributed by Filippov \cite[p.~234]{f88} to Bautin and Leontovich \cite{BL76} and Skriabin \cite{Skriabin}, and give the description of the local structure for piecewise analytic systems, $C^\omega_*$ (Theorem~\ref{thm:analytic}). This theorem has been outlined before \cite[§19.4]{f88}. It is included here because it does not appear to be well known. Our proof clarifies and simplifies the one given in \cite[§19.4]{f88}. 

In section~\ref{section:finitetime} we show that the approach to the singularity can be in finite time for piecewise non-analytic systems, $C^k_*$, $k\in \Z^+ \cup \{\infty\}$. In section~\ref{section:general} we describe an example of a sewed focus with infinitely many isolated stable periodic orbits (Theorem~\ref{thm:gensewedfocus}) and show how this can be generalized to create systems with periodic orbits intersecting any bounded symmetric closed set (Theorem~\ref{thm:remarkable}).

%Section~\ref{section:uncex} gives another example illustrating the complicated interplay of centre-like and focus-like behaviour that can occur.

%We take the fact that non-homeomorphic intersections of periodic orbits with the $x$-axis implies that the flows are topologically distinct to be self-evident. For a more detailed account of the equivalences between $C^k_*$ systems and their application to type 3 singularities see \cite{GHHJS}.  

\section{The analytic case}
\label{section:classic}
If \eqref{basicmodel} is $C^\omega_*$, there are two types of local behaviour, up to time reversal.

\begin{thm}\label{thm:analytic}Suppose that the origin is a sewed focus of a $C^\omega_*$ system \eqref{basicmodel}. Then the origin is either i) a centre or ii) all solutions approach the singularity in infinite time as either $t\to \infty$ (stable sewed focus) or $t\to -\infty$ (unstable sewed focus). 
\end{thm}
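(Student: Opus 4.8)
The plan is to reduce the two-dimensional, piecewise-defined flow near the sewed focus to a one-dimensional return map on the switching manifold and then analyse that map using analyticity. First I would set up the two half-return maps. Because of \eqref{sf1}–\eqref{sf2}, a trajectory starting at $(x_0,0)$ with $x_0>0$ enters $y>0$ (where $\dot x = P^+>0$ near O), follows an inverted-parabola-like arc, and returns to $\Sigma$ at a point $(x_1,0)$ with $x_1<0$; by the implicit function theorem applied to the analytic flow, $x_1 = f^+(x_0)$ for an analytic function $f^+$ defined near $0$ with $f^+(0)=0$. Similarly the passage through $y<0$ gives an analytic $x_2 = f^-(x_1)$ with $f^-(0)=0$, $x_2>0$. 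The composition $R = f^-\circ f^+$ is an analytic one-dimensional map on a half-neighbourhood $\{x_0\ge 0\}$ of $0$ fixing $0$, and the local phase portrait of the sewed focus is entirely determined by the orbits of $R$: a centre corresponds to $R=\mathrm{id}$, and focus-like behaviour to $R(x_0)\ne x_0$ for small $x_0>0$.

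Next I would pin down the leading-order behaviour of $R$. The key structural fact is a symmetry/monotonicity property: each half-map is an orientation-reversing diffeomorphism between one-sided neighbourhoods (positive $x$-axis to negative $x$-axis and back), so $R = f^-\circ f^+$ is an orientation-preserving analytic diffeomorphism of $[0,\varepsilon)$ fixing $0$. I would compute $R'(0)$. Using the tangency conditions — the trajectories are quadratically tangent to $\Sigma$ because $Q^\pm(0,0)=0$ but $P^\pm(0,0)\ne 0$, together with \eqref{sf2}/\eqref{sf2strong} controlling the sign and nondegeneracy — one finds that the leading behaviour of $f^\pm$ near $0$ is linear with a computable slope; the product of the two slopes, i.e. $R'(0)$, turns out to equal $1$. (Concretely, for the local model $\dot x = b^\pm$, $\dot y = c^\pm x$ the half-maps are exactly $x\mapsto -x$, so $R=\mathrm{id}$ to leading order, and in general $R'(0)=1$.) So $R(x) = x + a_m x^m + o(x^m)$ for some $m\ge 2$ and $a_m\ne 0$, unless $R\equiv\mathrm{id}$.

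Then the dichotomy follows from analyticity. If $R\equiv \mathrm{id}$ on a one-sided neighbourhood, every local trajectory closes up and O is a centre — case i). Otherwise $R(x)-x$ is a nonzero analytic function vanishing at $0$, hence $R(x)-x = a_m x^m(1+O(x))$ with $a_m\ne 0$ and $m\ge 2$; shrinking the neighbourhood, $R(x)-x$ has a strict sign for $0<x<\varepsilon$, so the iterates $x_n = R^n(x_0)$ are strictly monotone and, being bounded and confined to $[0,\varepsilon)$, converge monotonically to the unique fixed point $0$. Reversing time if necessary we may take $a_m<0$, so $x_n\downarrow 0$: O is attracting in forward time — a stable sewed focus (and time-reversed, an unstable one) — which is case ii). Finally I would verify that the approach is in \emph{infinite} time: the time between successive crossings is bounded below because each excursion into $y>0$ or $y<0$ moves $x$ by a definite $O(x_0)$ amount while $\dot x = P^\pm$ stays bounded, so infinitely many crossings are needed, consuming infinite total time (alternatively, the sum $\sum (x_n-x_{n+1})$ of excursion "widths" telescopes to a finite value $x_0$ while each excursion costs time $\gtrsim x_n/\!\max|P^\pm|$, and since $x_n$ decays only polynomially, $\sum x_n = \infty$).

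The main obstacle I anticipate is the rigorous construction and regularity of the half-return maps $f^\pm$ and, in particular, the clean computation that $R'(0)=1$ rather than merely that $f^+$, $f^-$ are well-defined. The trajectories are tangent to $\Sigma$ at O, so the return time itself degenerates ($\to 0$) as $x_0\to 0$, and one must be careful that the composition of the flow with the crossing-time function is genuinely analytic up to the boundary $x_0=0$; this is where the quadratic (Morse) nature of the tangency, guaranteed by $P^\pm(0,0)\ne 0$ and the nondegeneracy built into \eqref{sf2}, does the work via a Morse-lemma-type change of variables. Once the half-maps and their leading coefficients are in hand, the dichotomy itself is a short consequence of the identity theorem for analytic functions.
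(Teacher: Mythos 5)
Your overall strategy is the same as the paper's: reduce to analytic half-return maps on $\Sigma$, use the identity theorem to split into the cases return map $\equiv$ identity (centre) versus first nonzero term of order $m\ge 2$ (focus), and then show the crossing points $x_n$ form a divergent series so that the total transit time is infinite. (The paper packages the two half-maps as involutions $\sigma^\pm$ with $\sigma^\pm\circ\sigma^\pm=\mathrm{id}$, which gives $\sigma^{\pm\prime}(0)=-1$ for free and hence $R'(0)=1$ rigorously; your ``one finds the product of slopes is $1$'' should be replaced by that one-line argument.) However, two steps in your write-up are genuinely problematic. First, your primary infinite-time argument --- ``the time between successive crossings is bounded below \dots so infinitely many crossings consume infinite total time'' --- is false: each excursion from $(x_n,0)$ to $(x_{n+1},0)$ covers an $x$-distance $\approx 2\abs{x_n}\to 0$ at bounded speed, so the crossing times tend to zero. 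Indeed, if they were bounded below, finite-time approach would be impossible in \emph{any} smoothness class, contradicting Theorem~\ref{thm:finitetime}. Your parenthetical alternative (total time $\asymp\sum\abs{x_n}$) is the correct route, but the assertion that ``$x_n$ decays only polynomially, so $\sum x_n=\infty$'' is not a proof: polynomial decay like $n^{-2}$ is summable. What must be shown is that $x_{n+1}=x_n-\abs{a_m}x_n^{m}+o(x_n^m)$ with $m\ge2$ forces $x_n\ge c/n$, which is exactly the comparison with the harmonic series that the paper carries out by induction against $v_n=x_2/(2n)$. That estimate is the content of the step, and it is missing from your proposal.

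Second, your treatment of the analyticity of the half-return maps at the tangency point assumes the tangency is quadratic (``Morse-lemma-type change of variables''), which corresponds to the sufficient condition \eqref{sf2strong}, not to the hypothesis \eqref{sf2} under which the theorem is stated. Under \eqref{sf2} alone one can have $Q^+(x,0)=-x^{2k+1}r^+(x)$ with $k\ge 1$, so trajectories meet $\Sigma$ along curves $y=c-x^{2k+2}W^+(x)$ of arbitrarily high even contact order, and a Morse lemma does not apply. The paper handles this with a flow-box argument: straighten the flow analytically so orbits become horizontal lines, observe that $\Sigma$ becomes the graph $v=C(1+G(u))u^{2k+2}$, and normalise to $w=u^{2k+2}$, in which coordinates the half-return map is $u\mapsto -u$ and hence analytic. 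You correctly identified this as the main obstacle, but the fix you sketch only covers the generic case; the degenerate odd-order tangencies need the more careful normal form.
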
 

\begin{proof} 
The approach of Bautin and Leontovich \cite{BL76}, as described by Filippov \cite[p.~234]{f88}, is to define %{\it predecessor} and 
%{\it successor} 
functions $\sigma^\pm$ in a neighbourhood of the singularity. 

With the time convention as in
\eqref{sf1}, in $y\ge 0$, the function $\sigma^+$ takes $(x_1,0)$ on $\Sigma$, where $x_1<0$, to the next intersection of the flow with $\Sigma$, given by $(x_2,0)$, in forwards time, where $x_2 >0$. Hence in $x<0$, $\sigma^+(x_1)=x_2$, locally; see Figure~\ref{fig:Fil80}.
We can extend $\sigma^+$ to $x\ge 0$ by defining $\sigma^+(0)=0$ and $\sigma^+(x_2)=x_1$ with $x_2>0$ iff $\sigma^+(x_1 )=x_2$. 

The function $\sigma^-$ on $\Sigma$ following the flow in $y\le 0$, is defined in the same way: in $x<0$, $\sigma^-(x_1)=x_0>0$ and in $x\ge0$,  $\sigma^-(0)=0$ and $\sigma^-(x_0)=x_1$.

Hence
\begin{equation}\label{ssid}
\sigma^\pm (\sigma^\pm (x))=x.
\end{equation} 

We claim that $\sigma^\pm$ extend to analytic functions on a neighbourhood of $x=0$ with
\begin{equation}\label{sig0}
\sigma^\pm (0)=0,\quad \sigma^{\pm \prime}(0)=-1.
\end{equation}
It is straightforward to show that $\sigma^{\pm \prime}(0)=-1$. Differentiate both sides of \eqref{ssid} with respect to $x$, to get $\sigma^{\pm \prime}(\sigma^{\pm }(x))\sigma^{\pm \prime}(x)=1$. By assumption $\sigma^\pm (0)=0$. Hence $(\sigma^{\pm \prime} (0))^2=1$, $\sigma^{\pm \prime} (0)=\pm1$. Since $x\sigma^{\pm}(x)<0$, then $\sigma^{\pm \prime}(0)=-1$.

We delay the justification of the claim to analyticity to the end of the proof.
Let 
\begin{equation}\label{chidef}
\chi (x)\equiv\sigma^+(x)-\sigma^-(x)
\end{equation}
which is analytic on a neighbourhood of the origin.
%\footnote{Note that for $x\ne 0$ then if $\chi (x)<0$ the solution moves towards the origin in forwards time (the origin is a stable sewed focus). If $\chi(x)>0$  solutions move away from the origin (the origin is an unstable sewed focus, shown in Fig.~\ref{fig:Fil80}). If $\chi(x)= 0$ then there is a periodic orbit through $(x,0)$. If $\chi(x)$ takes both zero and non-zero values in, then the origin is a sewed centre-focus. See Theorem~\ref{thm:gensewedfocus}.} 
Using \eqref{sig0}, we have
\begin{equation}\label{chideriv}
\chi (0)=0, \quad \chi^\prime (0)=0.
\end{equation}

We begin by showing that if $\chi (x)>0$ ($<0$) $\forall x<0$ in a neighbourhood of the singularity, then  $\chi (x)>0$ ($<0$) $\forall x>0$ (locally) as well. 

Clearly $\sigma^{\pm\prime}(x)<0$ on a neighbourhood of the origin so $\sigma^\pm$ are decreasing functions\footnote{Also true since orbits of a
vector field on the plane can not intersect.} of $x$ locally. Suppose that $\chi (x)>0$, that is $\sigma^-(x)<\sigma^+(x)$, for all $x<0$ in a neighbourhood of $x=0$. If we let $\sigma^+ (x)=x_+, \sigma^- (x)=x_-$, then $x_+>x_->0$. Since $\sigma^+$ is decreasing this implies that $\sigma^+(x_+)<\sigma^+(x_-)$. Subtract $\sigma^- (x_-)$ from both sides and hence
\[
0 = \sigma^+(x_+)-\sigma^- (x_-) < \sigma^+ (x_-)-\sigma^-(x_-)
\]
where the equality holds because $\sigma^+ (x_+)=\sigma^- (x_-)=\sigma^\pm (\sigma^\pm (x))=x$ using \eqref{ssid}. Hence if $\chi (x)>0$ in $x<0$ locally then $\chi (x)>0$ in $x>0$ locally. This follows from Figure~\ref{fig:Fil82}, since the functions $z=\sigma^{\pm}(x)$ both decrease with $x$ and are symmetric about the line $z=x$. If $\chi (x)<0$ then the argument is completely analogous.

\begin{figure}
\begin{center}
\begin{overpic}[height=0.35\textheight]{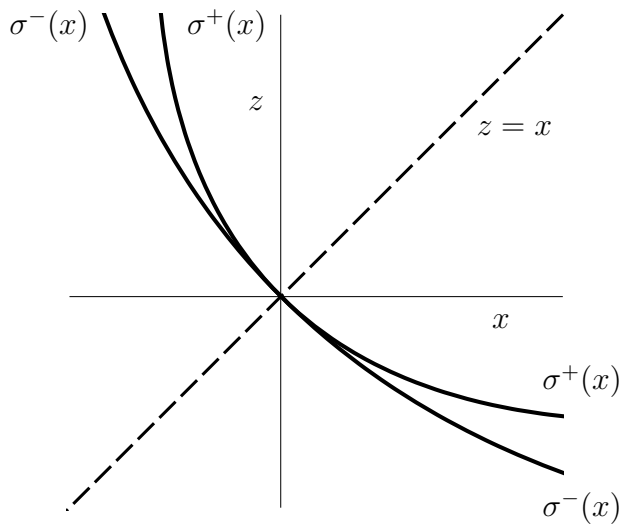}
\put(95,25){$\sigma^+(x)$}
\put(95,0){$\sigma^-(x)$}
\put(25,95){$\sigma^+(x)$}
\put(-10,95){$\sigma^-(x)$}
\put(37,80){$z$}
\put(85,37){$x$}
\put(82,75){$z=x$}
\end{overpic}
 \end{center}
\caption{Demonstration that if $\chi (x)\equiv\sigma^+(x)-\sigma^-(x)>0$ $\forall x<0$ in a neighbourhood of the singularity, then  $\chi (x)>0$ $\forall x>0$ (locally) as well. 
Reproduction of \cite[Figure 82, p. 235]{f88}.}
\label{fig:Fil82}
\end{figure}

Since $\chi$, $\sigma^+$ and $\sigma^-$ are analytic then either i) $\chi (x)\equiv 0$ on a neighbourhood of $x=0$, which implies that all local solutions are periodic and so the origin is a centre, or ii) there exists $r\ge 2$ such that $\chi^{(k)}(0)=0$, $k=1, \dots ,r-1$ and 
$\chi^{(r)}(0)\ne 0$. In this case the previous argument implies that $r$ is even, $r=2p$ say, and so locally
\begin{equation}\label{chiexp}
\chi (x)=\frac{\chi^{(2p)}(0)}{(2p)!}x^{2p}+o(x^{2p}).
\end{equation}
Hence in a neighbourhood of $x=0$ either $\chi (x) <0$ for all $x\ne 0$ (a stable focus) or $\chi (x) >0$ for all $x\ne 0$ (an unstable focus).

The proof that the approach of the origin takes place in infinite time for the sewed focus is given in \cite[pp.~237--238]{f88}. Suppose that the first non-zero derivative of $\chi$ at the origin is at order $2p$, as in \eqref{chiexp}. Let successive intersections of a trajectory with the $x$-axis be denoted by $x_{n}$. By definition \eqref{chidef}
\[
x_{2n+2}=x_{2n}+\chi (x_{2n+1}),
\]
where $x_{2n}>0, \, \forall \, n$. Then \eqref{sig0} implies that $x_{2n+1}/x_{2n}\to -1$ as $n\to \infty$.  Hence by \eqref{chiexp}
\[
x_{2n+2}-x_{2n}=ax^{2p}_{2n+1} + o(x^{2p}_{2n+1})
\]
where $a=\ovfrac{\chi^{(2p)}(0)}{(2p)!}\ne 0$. Consider the stable\footnote{The unstable sewed focus $a>0$ is similar after reversing the direction of time.} sewed focus ($a<0$). Then for sufficiently large $n$  there exists $A >0$ such that  
\[
x_{2n+2}\ge x_{2n}-Ax_{2n}^{2p}\ge  x_{2n}-Ax_{2n}^2.
\]
We will now show that $\sum_1^\infty x_{2n}$ diverges.  Since the speed in the $x$-direction is bounded away from zero in a neighbourhood of the origin, then the time taken to approach the origin will also then diverge.

Let $u_{n+1}=u_n-Au_n^2$. If $u_1=x_2$ then $\sum_1^\infty x_{2n}\ge \sum_1^\infty u_n$. Let $v_n=\frac{x_2}{2n}$ for $n\ge 2$, chosen because $\sum _1^\infty v_n$ diverges.

Now compare $u_n$ with $v_n$. Clearly $u_1> v_1$. Now suppose that $u_n> v_n \, \forall \, n \in [2,m]$. Then
\[
u_{m+1}=u_m-Au_m^2 
%\quad {\rm  and}\quad v_{m+1}=\frac{x_2}{2(m+1)}.
%\]
%By  assumption $u_m>v_m=\ovfrac{x_2}{(2m)}$ and so
%\[
%u_{m+1}
>\frac{x_2}{2m}-A\frac{x_2^2}{4m^2}. 
\]
So $u_{m+1}>v_{m+1}$ provided
\[
\frac{x_2}{2m}-A\frac{x_2^2}{4m^2}>\frac{x_2}{2(m+1)}
\]
or, on rearranging
%\[
%\frac{x_2}{2}m^2>Ax_2^2m(m+1)
%\]
%i.e.
%\[
%\frac{x_2}{2}\left( 1-2Ax_2\right) m>Ax_2^2.\]
%\[x_2<\frac{2m}{A(m+1)}.\]
\[2m^2x_2>Ax_2^2m(m+1).\]

Thus if $x_2$ is sufficiently small then $u_m>v_m$. Hence  both $\sum_1^\infty x_{2n}$ and $\sum_1^\infty u_n$ diverge, by the comparison test.
 
It remains only to demonstrate that $\sigma^{\pm}$ are analytic on a neighbourhood of $x=0$.  Filippov \cite{f88} refers to classic analysis of Andronov et al.\ \cite{ALGM}, but here we sketch a rather simpler explanation due to Chillingworth \cite{DC2016}.

Consider $y\ge 0$ (the argument in $y<0$ is similar). We have, from \eqref{basicmodel},
\[
\dot x = P^+(x,y), \quad \dot y =Q^+(x,y)
\]
with, from \eqref{sf1} and \eqref{sf2},  
\[
P^+(0,0)=b^+>0, \quad xQ^+(x,0)<0.
\]
Both $P^+$ and $Q^+$ are analytic in a neighbourhood of $(0,0)$. Then write $P^+(x,y)=b^++p^+(x,y)$ with $p^+$ analytic and $p^+(0,0)=0$. Write 
$Q^+(x,y)=-x^{2k+1}r^+(x)+ys^+(x,y)$ for some $k\ge 0$, with $r^+(0)=\frac{1}{(2k+1)!}\frac{\partial^{2k+1}Q^+}{\partial x^{2k+1}}(0,0)\ne 0$ (an odd power is needed to ensure that $xQ^+(x,0)<0$ locally), with both $r^+$ and $s^+$ analytic near $(0,0)$.

\begin{figure}
\centering
\includegraphics[width=10cm]{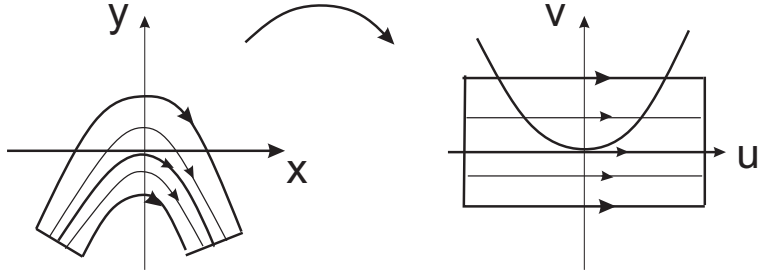}
\caption{Change of coordinates takes $y=0$ to a generalized parabola.}
\label{fig:flowbox}
\end{figure}

Then $\ovdiff{y}{x}=F^+(x,y)$ where $F^+$ is locally analytic and given by
\begin{equation}\label{F} 
F^+(x,y)=\frac{Q^+(x,y)}{P^+(x,y)}=-x^{2k+1}R^+(x)+yS^+(x,y)
\end{equation}
with $R^+$ and $S^+$ analytic, close to $r^+$ and $s^+$ up to scaling. 
Thus solutions must lie\footnote{Substitute into \eqref{F}.} on analytic curves of the form
\[
y=c-x^{2k+2}W^+(x)
\]
where $c$ is constant, $W^+$ is analytic, and $W^+(0)>0$. 

Consider a flow box in a neighbourhood of the origin bounded by two trajectories, one with $c>0$ and the other with $c<0$. By an analytic change of coordinates $(x,y)\to (u,v)$ this can be transformed\footnote{One example of a simple transformation is $(x,y)\to (u,v)=(x,y+x^{2k+2}W^+(x))$. In that case, $y=0$ would map to $v=W^+(u)u^{2k+2}$ and so $C(1+G(u))=W^+(u)$.} to a flow where solutions lie on horizontal lines as shown in Figure~\ref{fig:flowbox}. Then the line $y=0$ maps to a curve
\[
v=C(1+G(u))u^{2k+2}.
\]
An analytic near identity change of coordinates, leaving $u$ unchanged, of the form $(u,v) \mapsto (u,w)$,  brings this to 
\[
w=u^{2k+2}.
\]
In these new coordinates the return map is simply $u\to -u$ which is analytic. Hence $\sigma^+$ is analytic, concluding the proof.

\end{proof}

Provided the smallest non-zero derivative of $\chi$ exists, the proof also works for $C^k$ functions.

\begin{thm}\label{thm:strong}If the origin is a sewed focus of a $C^k_*$ system, $k\ge 2$, and  for some $p$ such that
$2p\le k$ the function $\chi$ is $C^{2p}$ with
\[
\chi^{(s)}(0)=0, ~ s=1,2,\dots, 2p-1, \quad \chi^{(2p)}(0)\ne 0,
\]
then locally trajectories approach the origin in infinite time, as $t\to \infty$ if $\chi^{(2p)} (0)<0$ and as $t\to-\infty$ if $\chi^{(2p)} (0)>0$.
\end{thm}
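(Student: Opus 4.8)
The plan is to recycle the proof of Theorem~\ref{thm:analytic} almost verbatim, isolating the two places where analyticity was used and checking that $C^{2p}$ regularity of $\chi$ is all that is actually needed. First I would observe that the construction of the half-return maps $\sigma^\pm$, the involution identity \eqref{ssid}, and the normalisation \eqref{sig0} --- in particular $\sigma^{\pm\prime}(0)=-1$ --- use only that $\sigma^\pm$ is differentiable at $0$ together with the sign condition \eqref{sf2}; for a $C^k_*$ system with $k\ge 2$ this holds, and none of it uses analyticity. One then defines $\chi$ by \eqref{chidef}; by hypothesis $\chi\in C^{2p}$ near $0$, and \eqref{chideriv} still gives $\chi(0)=\chi'(0)=0$.

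Next, in place of the analytic dichotomy ``$\chi\equiv 0$, or $\chi$ has a zero of finite even order at $0$'', I would invoke the hypothesis directly. Since $\chi^{(s)}(0)=0$ for $s=0,1,\dots,2p-1$ and $\chi^{(2p)}(0)\ne 0$, Taylor's theorem with the Peano form of the remainder (which requires exactly $\chi\in C^{2p}$) gives
\[
\chi(x)=a\,x^{2p}+o(x^{2p}),\qquad a=\frac{\chi^{(2p)}(0)}{(2p)!}\ne 0 .
\]
Because the exponent $2p$ is even, $\chi$ has constant sign --- namely that of $a$ --- on a punctured neighbourhood of $0$, so the origin is a focus, not a centre. (The sign-preservation argument of the proof of Theorem~\ref{thm:analytic}, which needs only monotonicity of $\sigma^\pm$ and \eqref{ssid}, shows moreover that the first non-vanishing derivative of $\chi$ at $0$ is necessarily of even order, so writing it as $2p$ costs nothing.)

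With this in hand the infinite-time estimate of Theorem~\ref{thm:analytic} applies unchanged. Let $x_{2n}>0$ be the successive intersections of a local trajectory with the positive $x$-axis and $x_{2n+1}$ the intervening (negative) intersection, so that, as there, $x_{2n+2}=x_{2n}+\chi(x_{2n+1})$ and $x_{2n+1}/x_{2n}\to -1$ by \eqref{sig0}; hence $x_{2n+2}-x_{2n}=a\,x_{2n+1}^{2p}+o(x_{2n+1}^{2p})$. In the case $\chi^{(2p)}(0)<0$ (so $a<0$) this yields, for all large $n$ and some $A>0$,
\[
x_{2n+2}\ge x_{2n}-A\,x_{2n}^{2p}\ge x_{2n}-A\,x_{2n}^2 ,
\]
the last step using $2p\ge 2$; comparing $x_{2n}$ first with $u_{n+1}=u_n-Au_n^2$ and then with the divergent series $\sum_n x_2/(2n)$ exactly as in Theorem~\ref{thm:analytic} shows $\sum_n x_{2n}=\infty$, and since the horizontal speed $\abs{P^\pm}$ is bounded away from zero near the origin the total time to reach O diverges, so the trajectory approaches O as $t\to\infty$. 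The case $\chi^{(2p)}(0)>0$ follows by reversing time, which again produces a sewed focus and exchanges the two time directions, giving approach as $t\to-\infty$.

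The point is that there is essentially no new obstacle here: the delicate ingredient of Theorem~\ref{thm:analytic} --- the flow-box normal-form argument establishing analyticity of $\sigma^\pm$ --- is simply not needed, because the infinite-time mechanism depends only on the leading polynomial term of $\chi$ at $0$, and the hypothesis $\chi\in C^{2p}$ with $\chi^{(2p)}(0)\ne 0$ supplies exactly that. The only thing to watch is the bookkeeping of regularity: differentiability of $\sigma^\pm$ at $0$ for the normalisation $\sigma^{\pm\prime}(0)=-1$, and $C^{2p}$ of $\chi$ for the Taylor expansion with $o(x^{2p})$ remainder --- both available under the stated hypotheses. The one genuinely new observation, relative to the analytic case, is that ``the first non-zero derivative of $\chi$ exists'' is now part of the assumptions rather than a consequence of analyticity.
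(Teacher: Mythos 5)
Your proposal is correct and takes essentially the same route as the paper, which proves Theorem~\ref{thm:strong} simply by remarking that the proof of Theorem~\ref{thm:analytic} carries over once the first non-zero derivative of $\chi$ is assumed to exist; your more careful bookkeeping (Peano-form Taylor remainder for $\chi\in C^{2p}$, the observation that analyticity of $\sigma^\pm$ is only needed to guarantee the dichotomy, not the divergence estimate) fills in exactly the details the paper leaves implicit.
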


\section{Finite time approach}
\label{section:finitetime}
The approach to the origin is asymptotic for $C^k_*$ systems satisfying the conditions of Theorem~\ref{thm:strong}. The aim of this section is to show that \textit{finite} time approach is also possible for these non-analytic systems.

\begin{thm}\label{thm:finitetime}For all $k\in \Z^+ \cup \{\infty\}$, there exists a $C^k_*$ system of the sewed focus and a neighbourhood $U$ of the origin such that if $(x_0,y_0)\in U$ then the trajectory through $(x_0,y_0)$ tends to $(0,0)$ in finite time.
\end{thm}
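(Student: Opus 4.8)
The plan is to construct an explicit $C^k_*$ sewed focus for which the return map on $\Sigma$ is built by hand so that successive crossing points $x_{2n}$ decay fast enough to make $\sum_n x_{2n}$ converge. Recall from the proof of Theorem~\ref{thm:analytic} that, writing $\chi(x)=\sigma^+(x)-\sigma^-(x)$ and letting $x_{2n}>0$ denote the successive intersections of a trajectory with $\Sigma$ on the positive axis, one has $x_{2n+2}=x_{2n}+\chi(x_{2n+1})$ with $x_{2n+1}/x_{2n}\to -1$, and that the speed in the $x$-direction is bounded away from zero near the origin; hence the total time to reach $O$ is comparable to $\sum_n x_{2n}$. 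In the analytic (or $C^k$ with a finite first nonzero derivative) case $\chi(x)\sim a x^{2p}$ forces $x_{2n}\to 0$ only like $1/n$, and the sum diverges. To beat this we need $\chi$ to vanish at the origin faster than any polynomial while still being $C^k$ (indeed $C^\infty$), i.e. we engineer $\sigma^+$ and $\sigma^-$ so that their difference is flat-but-not-identically-zero near $0$.

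The key steps, in order. First I would reduce the problem to prescribing the two half-return maps: by the flow-box/straightening construction in the proof of Theorem~\ref{thm:analytic}, if I specify a $C^k$ involution $\sigma^+$ on a neighbourhood of $0$ with $\sigma^+(0)=0$, $\sigma^{+\prime}(0)=-1$, $x\sigma^+(x)<0$, then I can realize it by a $C^k_*$ vector field in $y\ge 0$ of the sewed-focus type (and similarly $\sigma^-$ in $y\le 0$); the simplest choice is $\sigma^-(x)=-x$ exactly, realized by, say, $\dot x=b^-<0,\ \dot y = -x$ in $y<0$ (parabolic arcs, symmetric), so that $\chi(x)=\sigma^+(x)+x$. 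Second, I would choose $\sigma^+$ so that $\chi$ is a prescribed flat function: take $\chi(x)=-\varepsilon(x)$ with $\varepsilon$ smooth, even, $\varepsilon(x)>0$ for $x\ne 0$, and $\varepsilon(x)$ decaying like $e^{-1/|x|}$ near $0$ — but I must do this consistently with $\sigma^+$ being an \emph{involution}. The clean way is to pick $\sigma^+$ directly as a smooth involution close to $x\mapsto -x$: e.g. conjugate $x\mapsto -x$ by a smooth near-identity diffeomorphism $h$ that is the identity outside a small interval, $\sigma^+ = h^{-1}\circ(-\mathrm{id})\circ h$; then $\sigma^+$ is automatically a $C^\infty$ involution with the correct $1$-jet, and $\chi(x)=h^{-1}(-h(x))+x$, which I can arrange to be nonzero for $x\ne 0$ and flat at $0$ of any prescribed (super-polynomial) order by choosing $h(x)=x+\phi(x)$ with $\phi$ suitably flat and monotone-perturbing. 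Third, with such a $\chi$ in hand, I estimate the recursion: from $x_{2n+2}=x_{2n}+\chi(x_{2n+1})$, $\chi<0$ off the origin, and $\chi(x)=O(e^{-c/|x|})$, a standard comparison shows $x_{2n}$ decreases and in fact $x_{2n+2}\le x_{2n}-c_1 e^{-c_2/x_{2n}}$ for small $x_{2n}$; comparing with the ODE $\dot u = -c_1 e^{-c_2/u}$, whose solutions reach $0$ in finite time (since $\int_0^{\delta} e^{c_2/u}\,du<\infty$), gives that $\sum_n x_{2n}<\infty$, hence finite-time approach. For finite $k$ rather than $k=\infty$ the same construction works verbatim (a $C^\infty$ construction is in particular $C^k$), so the single smooth example handles all $k$ at once; if one insists on the sharpest $C^k$ statement one can instead take $\chi(x)\asymp |x|^{m}$ with $m$ chosen large depending on $k$ — but already $|x|^m$ with $m\ge 2$ is \emph{not} enough (the sum still diverges for $x_{2n}\sim n^{-1/(m-1)}$ only when $m=2$; for $m>2$ one gets $x_{2n}\sim n^{-1/(m-1)}$ and $\sum n^{-1/(m-1)}$ diverges too), so one genuinely needs the super-polynomially flat $\chi$, which is why the $C^\infty$-but-non-analytic construction is the natural one.

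The main obstacle is the consistency constraint: $\chi$ is not free data because $\sigma^+$ must be an involution ($\sigma^+\circ\sigma^+=\mathrm{id}$) and must be realized by an actual sewed-focus vector field. Handling the involution constraint is exactly what the conjugation trick $\sigma^+=h^{-1}\circ(-\mathrm{id})\circ h$ solves, reducing the problem to choosing one near-identity diffeomorphism $h$ with prescribed flatness; the realizability is then supplied by running the straightening argument of Theorem~\ref{thm:analytic} backwards (a $C^k$ return map on a transversal determines a $C^k$ vector field near it up to the obvious freedom), which is routine. The remaining work — checking $x\sigma^+(x)<0$, computing the $1$-jet, and carrying out the ODE-comparison finite-time estimate — is elementary calculus, so I would present the explicit $h$ (or an explicit $\sigma^+$) and just verify the decay estimate in detail.
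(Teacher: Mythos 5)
Your strategy has a fatal asymptotics error at its core, stemming from a false calculus claim. You want $\sum_n x_{2n}<\infty$, so you need the crossings to decay \emph{fast}; but your recursion $x_{2n+2}\le x_{2n}-c_1e^{-c_2/x_{2n}}$ makes them decay extremely \emph{slowly}, because the decrement $c_1e^{-c_2/x_{2n}}$ is smaller than any power of $x_{2n}$. Your justification that the comparison ODE $\dot u=-c_1e^{-c_2/u}$ reaches $0$ in finite time rests on the claim $\int_0^\delta e^{c_2/u}\,du<\infty$, which is false: $e^{c_2/u}\to\infty$ as $u\to0^+$ faster than any power of $1/u$, so the integral diverges and the ODE approaches $0$ only in infinite time (solutions decay roughly like $1/\log t$). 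Correspondingly $x_{2n}$ decays slower than any power of $1/n$ and $\sum_n x_{2n}=\infty$. Your own observation that $\chi(x)\asymp|x|^m$ gives divergence for every $m$ should have been the warning: making $\chi$ flatter moves you \emph{further} from finite time, not closer. Indeed no construction in which $\sigma^\pm$ are $C^1$ involutions with $\sigma^{\pm\prime}(0)=-1$ (which your conjugation $\sigma^+=h^{-1}\circ(-\mathrm{id})\circ h$ by a near-identity $h$ forces) can possibly work: the full return map $\sigma^-\circ\sigma^+$ then has derivative $1$ at the origin, and its iterates converge subpolynomially, so the time to reach the singularity is infinite. As carried out, your construction would prove a flat analogue of Theorem~\ref{thm:strong} --- the opposite of the statement.

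The missing idea is that for non-analytic systems the half-return maps need not be differentiable at the origin at all, and the way to get finite time is to make the return map \emph{superattracting} there. The paper achieves this by giving the invisible tangency different orders on the two sides of the origin: in $y>0$ take $\dot x=1$, $\dot y=-4x^3H(x)-8x^7H(-x)$ with $H$ the Heaviside step function (a $C^2$ field), so orbits lie on $y=c-x^8$ for $x<0$ and on $y=c-x^4$ for $x>0$; a trajectory entering $y>0$ at $(-x_0,0)$ exits at $(x_0^2,0)$, i.e.\ $\sigma^+(x)=x^2$ on $x<0$, with one-sided derivative $0$ rather than $-1$. With the mirror-image field in $y<0$, successive crossings satisfy $x_r=x_0^{2^r}$ and the total time $T=x_0+2\sum_{r\ge1} x_0^{2^r}$ converges. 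Raising the powers gives $C^k_*$ for any finite $k$, and pairing tangencies of type $e^{-1/x}$ and $e^{-1/x^2}$ gives the $C^\infty_*$ case. Your idea of realizing a prescribed return map by running the flow-box argument backwards is reasonable in principle, but the return map you prescribe is exactly the wrong one.
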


%\emph{Proof:} 
\begin{proof}
First we assume $k\in \Z^+$. Let $H(x)$ denote the Heaviside function,
\begin{equation}\label{eq:Heav}
H(x)=\begin{cases}1 & {\rm if}~x\ge 0\\ 0 & {\rm if}~x<0 .\end{cases}
\end{equation}
Consider system \eqref{basicmodel} with
%\begin{equation}\label{sewedfocc1}
%\begin{array}{l}
%P^+(x,y)=-P^-(-x,-y)=1, \\
%Q^+(x,y)=-Q^-(-x,-y)=-4x^3H(x)-8x^7H(-x), \quad y>0.
%\end{array}\end{equation}
\begin{equation}\label{sewedfocc1}
\begin{aligned}
P^+(x,y)&=1 & Q^+(x,y)&=-4x^3H(x)-8x^7H(-x), \\
P^-(x,y)&=-1, & Q^-(x,y)&=-4x^3H(-x)-8x^7H(x).
\end{aligned}
\end{equation}
The symmetry and the values of the constants (other than their signs) are not important, and have been chosen for algebraic convenience. This example is $C^2_*$ and has a stable sewed focus at the origin. There is no sliding. 

Given an initial point $(-x_0,0)$ with $x_0>0$ sufficiently small, the solution will strike $\Sigma$ at points $(x_1,0), (-x_2,0), \dots$ with $x_i>0$. The total time $T$ taken is given by
\[
T=x_0+2\sum_{r=1}^\infty x_r,
\]
since from \eqref{sewedfocc1} the speed in the $x$-direction is equal to 1.

Thus if $T<\infty$ the singularity is contained in the approaching trajectory, and is reached in finite time. The trajectories of systems in the same topological class are mapped to each other. So the inclusion of the origin O in an approaching trajectory implies the existence of a new topological class, not possible in the analytic case.

If $y>0$ then integral curves are given by
\begin{equation}\label{focintcur}
y=c^--x^8, \ \ x<0, \quad y=c^+-x^4, \ \ x>0
\end{equation}
where the constants $c^{\pm}$ are determined by initial conditions. The integral curves in $y<0$ are determined by symmetry.

Thus a solution starting at $(-x_0,0)$, $x_0>0$, lies on the integral curve
\[ y=x_0^8-x^8 \]
and strikes the $y$-axis after time $x_0$ at $y=x_0^8$. It now continues on   an integral curve of the form $y=c^+-x^4$. Since $y=x_0^8$ when $x=0$, $c^+=x_0^8$. This integral curve in turn strikes $\Sigma$ at $(x_1,0)$, $x_1>0$, where $x_0^8-x_1^4=0$, i.e. $x_1=x_0^2$, after a further $x_1$ units of time.

Now in $y<0$, which by symmetry is effectively equivalent, the solution through $( x_1,0)$ strikes $\Sigma$ at $(-x_2,0)$ where $x_2=x_1^2=x_0^4$ after time $x_1+x_2$. By induction, the infinite sequence of intersections is given by the points $((-1)^{(r+1)} x_r,0)$ where
\[
x_r=x_0^{2^r}
\]
and hence
\begin{equation}
T=x_0+2\sum_{r=1}^\infty x_0^{2^r}
\label{totaltime}
\end{equation}
The sum \eqref{totaltime} is certainly less than twice the sum of all powers of $x_0$, which converges if $x_0<1$ as it is a geometric progression, so this sum also converges to a finite value. 

Replacing $4x^3$ and $8x^7$ in \eqref{sewedfocc1} by $2kx^{2k-1}$ and $4kx^{4k-1}$, respectively, with $k\ge 3$, the relation between subsequent intersections is unchanged and so this example can be made $C^r_*$ for any $r<\infty$.

We now give an example\footnote{We are extremely grateful to an anonymous reviewer for this example.} of a $C^\infty_*$ system with finite time approach to the origin. 
Consider system \eqref{basicmodel} with
\begin{align}
        P^+(x,y)&=1, & Q^+(x,y)&=
        \begin{cases}\label{reviewerexample}
            -\frac{2}{x^3}e^{-1/x^2}, & x<0,\\
            0, & x=0,\\
            -\frac{1}{x^2}e^{-1/x}, & x<0
        \end{cases}\\
        P^-(x,y)&=-1, & Q^-(x,y)&=
        \begin{cases}\nonumber
            \frac{1}{x^2}e^{-1/x}, & x<0\\
            0, & x=0,\\
            -\frac{2}{x^3}e^{-1/x^2}, & x>0.
        \end{cases}
    \end{align}
Then $x_r = x^2_{r-1}$ for all $r \ge 1$ and $T$, also given by equation \eqref{totaltime}, is finite.

\end{proof}

%\medskip
Hence there is a new topological class with finite time approach to the singularity for piecewise non-analytic systems, $C^k_*$ with $k\in \Z^+ \cup \{\infty\}$.

\section{The general sewed focus}
\label{section:general}
In this section, we will prove that there are three possibilities for the local dynamics near a sewed focus, independent of the time taken to approach the singularity\footnote{The issue of finite versus infinite time approach in case \textit{(a)} is not included here as it was considered in section~\ref{section:finitetime}.}. Also we show how to construct a non-analytic piecewise smooth system with infinitely many isolated stable periodic orbits.

\begin{thm}\label{thm:gensewedfocus}
Consider an isolated sewed focus singularity of a  $C^k_*$ system \eqref{basicmodel}, where either $1\le k \le \infty$ or $k=\omega$. Let $\chi$ be defined by \eqref{chidef}. Then the singularity is either 
\begin{enumerate}
\item[(a)] a stable (or an unstable) sewed focus (with $\chi (x)\ne 0$ for all $x\ne 0$ locally) or
\item[(b)] a sewed centre (with $\chi (x) \equiv 0$ locally) or
\item[(c)] a sewed centre-focus (where $\exists~x\ne 0$ arbitrarily close to $x=0$ such that $\chi (x)=0$).
\end{enumerate}
If $k=\omega$ then only cases (a) or (b) can occur, otherwise all three cases can occur.
\end{thm}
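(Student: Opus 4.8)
The plan is to read off all three cases directly from the order geometry of the two half-return maps $\sigma^{\pm}$, exactly as in the proof of Theorem~\ref{thm:analytic}, but using monotonicity in place of analyticity. First I would collect the structural facts valid for every $C^k_*$ sewed focus: since $xQ^{\pm}(x,0)<0$, a trajectory leaving $(x,0)$ with $x<0$ enters $y>0$, turns, and returns to $\Sigma$ at a point $\sigma^{+}(x)>0$, and symmetrically below; so $\sigma^{+}$ and $\sigma^{-}$ are continuous, strictly decreasing near $0$, fix $0$, satisfy $x\sigma^{\pm}(x)<0$ and the involution identity $\sigma^{\pm}\circ\sigma^{\pm}=\mathrm{id}$ of \eqref{ssid}. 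Moreover every trajectory close enough to $\mathrm{O}$ meets $\Sigma$ (the arcs in $y\ne0$ are parabola-like graphs over $x$, never closed), so the local picture is governed entirely by the orientation-preserving interval homeomorphism $R:=\sigma^{-}\circ\sigma^{+}$ on $\{x<0\}$ and its mirror on $\{x>0\}$, each fixing $0$.

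Next I would prove two dictionary facts. (i) For $x\ne0$ near $0$, $\chi(x)=0$ iff the orbit through $(x,0)$ is periodic: if $\sigma^{+}(x)=\sigma^{-}(x)=:y$ then applying \eqref{ssid} gives $\sigma^{+}(y)=\sigma^{-}(y)=x$, so the two arcs close up; conversely a local periodic orbit meets $\Sigma$ in exactly one point on each side (an orientation-preserving interval homeomorphism fixing $0$ has no periodic point other than a fixed point), and there $\sigma^{+}=\sigma^{-}$. (ii) The sign-propagation argument already used in the proof of Theorem~\ref{thm:analytic} uses only monotonicity of $\sigma^{\pm}$ and \eqref{ssid}, so it survives: $\chi>0$ throughout $(-\delta,0)$ forces $\chi>0$ throughout $(0,\delta)$, and likewise with signs reversed. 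Since $R(x)-x=\sigma^{-}(\sigma^{+}(x))-\sigma^{-}(\sigma^{-}(x))$ and $\sigma^{-}$ is decreasing, $R(x)>x\iff\chi(x)<0$ and $R(x)=x\iff\chi(x)=0$.

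The trichotomy is then pure case analysis on the zero set of $\chi$ near $0$. If $\chi\equiv0$ on a neighbourhood, (i) makes every nearby orbit periodic: a sewed centre, case (b). If $\chi(x)\ne0$ for all $x\ne0$ near $0$, the intermediate value theorem together with sign propagation give $\chi$ a single sign on the whole punctured neighbourhood; then by (ii) $R$ has no fixed point near $0$, so on $(-\delta,0)$ either $R>\mathrm{id}$ throughout or $R<\mathrm{id}$ throughout, and monotone iteration (bounded by $0$, with limit necessarily a fixed point, hence $0$) forces every nearby trajectory to spiral into $\mathrm{O}$ when $\chi<0$, or out of it when $\chi>0$ --- a stable, respectively unstable, sewed focus, case (a). The only remaining possibility is that $\chi\not\equiv0$ near $0$ yet has a zero $\ne0$ in every neighbourhood of $0$: then, by (i), periodic orbits accumulate at $\mathrm{O}$ while not every nearby orbit is periodic, which is case (c) by definition.

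Finally, the smoothness dichotomy. For $k=\omega$, the flow-box computation at the end of the proof of Theorem~\ref{thm:analytic} makes $\sigma^{\pm}$, hence $\chi$, analytic, so its zero set near $0$ is either everything or just $\{0\}$; case (c) cannot occur and we are left with (a) or (b) --- this is exactly Theorem~\ref{thm:analytic}. For finite $k$ or $k=\infty$, cases (a) and (b) already occur for analytic, hence $C^k_*$, systems, while case (c) is realised by the explicit $C^k_*$ and $C^\infty_*$ constructions given in the remainder of this section (see Theorem~\ref{thm:remarkable}), whose periodic orbits accumulate at $\mathrm{O}$ without filling a neighbourhood. I expect the only genuine obstacle to be at the level of regularity in the merely-$C^k$ (especially $C^1$, $C^2$) setting: one must check that the parabola-like tangency structure used so freely in the analytic argument still yields well-defined, continuous, strictly monotone $\sigma^{\pm}$ down to $x=0$, which calls for a $C^k$ flow-box / implicit-function argument in place of the analytic normal form; everything past that is soft one-dimensional dynamics.
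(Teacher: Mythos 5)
Most of your argument is sound and, if anything, more careful than the paper's own proof: the paper treats the exhaustiveness of the trichotomy as essentially a case analysis on the zero set of $\chi$ and disposes of the $k=\omega$ case by citing section~\ref{section:classic}, exactly as you do. Your dictionary facts are correct --- $\chi(x)=0$ iff the orbit through $(x,0)$ is periodic (via the involution \eqref{ssid} and injectivity of $\sigma^-$), the sign-propagation argument uses only monotonicity of $\sigma^{\pm}$ and \eqref{ssid}, and the monotone-iteration argument correctly identifies $\chi<0$ with the stable focus. Your worry about regularity of $\sigma^{\pm}$ down to $x=0$ in the merely-$C^k$ setting is legitimate but resolvable by the flow-box argument, as you say.

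The genuine gap is in your realization of case (c). You claim it is furnished by the constructions of Theorem~\ref{thm:remarkable}, ``whose periodic orbits accumulate at $\mathrm{O}$.'' They do not: Lemma~\ref{fc} and Theorem~\ref{thm:remarkable} require $E$ to be \emph{compact} with $0\notin E$, so $E$ is bounded away from the origin, the zeros of $\chi$ are bounded away from $0$, and locally at the singularity every one of those examples is case (a), not case (c). The paper closes this gap with a separate explicit construction inside the proof of this theorem: take \eqref{infbelow} in $y<0$ and, in $y>0$, $P^+=1$, $Q^+(x,y)=-2x+2kx^{2k-1}f(1/x)-x^{2k-2}f'(1/x)$ with $f$ odd, so that orbits through $(-x_0,0)$ lie on $y(x)=-x^2+x^{2k}f(1/x)+x_0^2+x_0^{2k}f(1/x_0)$ and return to $\Sigma$ at $x_0$ precisely when $f(1/x_0)=0$; choosing $f(x)=\sin 2\pi x$ and $k\ge 2$ yields periodic orbits through $x_0=1/n$ for every integer $n\ge 4$, which do accumulate at the origin (alternately stable and unstable). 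Your proof could be repaired either by this construction or by extending Lemma~\ref{fc} to symmetric compact sets having $0$ as an accumulation point, but as written the existence of case (c) for $1\le k\le\infty$ --- the main new content of the theorem --- is not established.
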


\begin{proof}
For $C^\omega_*$ systems, by the results of section~\ref{section:classic}, either i) $\chi (x)\equiv 0$ (a sewed centre) or ii) $\exists~k\ge 1$ such that $\chi^{(2k)}(0)\ne 0$ and all lower derivatives are zero. In the latter case, either $\chi (x)>0$ or $\chi (x)<0$ for all $x\in B(0,\delta )\backslash\{0\}$ for $\delta >0$ small enough, giving either an unstable or a stable sewed focus respectively. Nothing else is possible.

%For $C^k_*$ systems, $1\le k\le \infty$ we consider vector fields in $y<0$ of the form
%\begin{equation}\label{below}
 %P^-(x,y)=-1, \quad Q^-(x,y)=-2x
%\end{equation}
%so that solutions in $y<0$ lie on parabolas of the form $y=-c^2+x^2$, and

For $C^k_*$ systems, $1\le k\le \infty$ we  consider vector fields in $y<0$ of the form
\begin{equation}\label{infbelow}
 P^-(x,y)=-1, \quad Q^-(x,y)=-2x.
\end{equation}
In $y<0$ solutions  lie on parabolas of the form $y=-c^2+x^2$. In $y>0$ define
\begin{equation}\label{infabove}
 P^+(x,y)=1, \quad Q^+(x,y)=-2x+2kx^{2k-1}f(\textstyle{\frac{1}{x}})-x^{2k-2}f^\prime (\textstyle{\frac{1}{x}})
\end{equation}
with $f(x)$ odd. By design, $Q^+$ is $C^k_*$ for sufficiently differentiable $f$. So \eqref{basicmodel}, with \eqref{infbelow} and \eqref{infabove} is $C^k_*$ and satisfies \eqref{Type3}.

The flow in $y>0$ has been chosen so that solutions through $(-x_0,0)$  lie on curves
\begin{equation}\label{intcurves}
y(x)=-x^2 +x^{2k}f(\textstyle{\frac{1}{x}})+x_0^2+x_0^{2k}f(\textstyle{\frac{1}{x_0}})
\end{equation}
where we use the fact that $f$ is odd. Hence
\[
y(x_0)=2 x_0^{2k}f(\textstyle{\frac{1}{x_0}}).
\]
The zeros of this function correspond precisely to the closed orbits of the vector field,
provided that $y(x)>0$ for all $x\in(-x_0,x_0)$.
%If $y(x_0)>0$ (for $x_0$ in the neighbourhood of the singularity) then $\sigma^+(-x_0)>x_0$ as the curve has not yet intersected the $y$-axis (improve the logic here!), whilst if $y(x_0)<0$ then $\sigma^+(-x_0)<x_0$ (and hence $\chi (x_0)>0$ in the first case and $\chi (x)<0$ in the second).
%%% meh note: this argument is more complicated than it looks. y(x_0)>0 does *not* guarantee no intersection
%%% of the x-axis in the interval (-x0,x0); you *also* need to guarantee that the perturbation
%%% coming from the x^{2k}f(1/x) terms is small enough.
%%% e.g. for f(x)=sin(2\pi x), k=2, x_0=1/3-0.001 y(x_0)>0 but y(-x_0+\epsilon)<0 for small \epsilon>0
%%%                                 x_0=1/3 has y(x_0)=0 but is *not* a periodic orbit!
%%% I've gone for the pragmatic solution, and just gone with an example that works
Thus if we choose
\[
f(x)=\sin 2\pi x
\]
and $k\ge2$, then $\forall \, n\in\BR$, $n\ge4$, we have  
\[
x_0=\frac{1}{n},
\]
which lies on a (different) periodic solution for each $n$; those with $n$ even are asymptotically stable and those with $n$ odd are unstable.

Choosing different forms of $f$ gives different zero sets. Hence case $(c)$ comes in infinitely many different types, depending on whether the components of sets with $\chi(x)=0$ are intervals or points, and how these sets are connected.
\end{proof}

We can now make some generalizations. We begin by recalling the following result.

\begin{lemma}\label{fc}Let $E$ be any symmetric compact subset of $\BR$ not containing the origin (i.e. $E=-E$ and $0\notin E$), and $k\in \Z^+\cup\{\infty\}$. Then there exists an odd $C^{k+1}$ function $f_E$ which, if $k<\infty$,  is not a $C^{k+2}$ function such that $f_E(x)=0$ if and only if $x\in E\cup\{0\}$.
\end{lemma}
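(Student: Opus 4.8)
The plan is to construct $f_E$ explicitly from the structure of the closed set $E$. Since $E$ is compact and $0\notin E$, there is some $\delta>0$ with $E\subseteq\{x:\delta\le|x|\le M\}$ for suitable $M$. The complement $\BR\setminus E$ is open, hence a countable disjoint union of open intervals; in $|x|>\delta$ we list those components as $\{(a_i,b_i)\}_{i\ge 1}$ (with possibly two unbounded ones). First I would build, on each bounded component $(a_i,b_i)$, a ``bump'' $g_i$ that is $C^\infty$, strictly positive on $(a_i,b_i)$, zero outside it, and with all derivatives vanishing at the endpoints — the standard $\exp(-1/((x-a_i)(b_i-x)))$ construction works. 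On each unbounded component I would similarly use a one-sided flat bump like $\exp(-1/(x-a))$ supported on $(a,\infty)$. Then set $h(x)=\sum_i \varepsilon_i\, g_i(x)$ on $x>0$ (and odd-extend), where the constants $\varepsilon_i>0$ are chosen small enough, decaying fast enough with $i$, that the series and all its derivatives up to order $k+1$ converge uniformly; this gives a $C^{k+1}$ (indeed $C^\infty$ if finitely many components, but in general we only need $C^{k+1}$) odd function whose zero set is exactly $E\cup\{0\}$ together with the issue at $x=0$ and the point where the two unbounded pieces are glued to the negative side.

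The delicate point — and the one I expect to be the main obstacle — is the regularity \emph{at the origin and at $\pm\delta$}, i.e. arranging that $f_E$ is exactly $C^{k+1}$ but not $C^{k+2}$ when $k<\infty$, while staying odd and vanishing precisely on $E\cup\{0\}$. Near $x=0$ we need $f_E\equiv 0$ on $(-\delta,\delta)$ in fact (since $0\notin E$ and $E$ is closed, a whole neighbourhood of $0$ avoids $E$), so $f_E$ is automatically $C^\infty$ there; the real constraint is to \emph{deliberately spoil} smoothness somewhere so that the function is not $C^{k+2}$. I would do this by replacing one of the $C^\infty$ bumps $g_{i_0}$ on a single chosen bounded component $(a,b)$ by a function with a controlled one-sided singularity in its $(k+2)$-nd derivative at, say, the left endpoint $a$: for instance $(x-a)^{k+1}\log(x-a)$ times a smooth cutoff, or $(x-a)_+^{k+1+\alpha}$ for $0<\alpha<1$ times a cutoff supported in $(a,b)$ — either is $C^{k+1}$ but not $C^{k+2}$ at $a$, is non-negative, and vanishes to order $k+1$ at $a$ and to infinite order (via the cutoff) at $b$. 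One must check this does not create extra zeros inside $(a,b)$ (choose the cutoff and the exponent so the bump is strictly positive on the open interval) and does not destroy the $C^{k+1}$ bound on the whole sum.

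Concretely, the steps in order are: (1) use compactness and $0\notin E$ to get $\delta,M$ and decompose $\{|x|>\delta\}\setminus E$ into components $\{(a_i,b_i)\}$; (2) for all but one bounded component, define flat $C^\infty$ non-negative bumps supported exactly on that component, and flat one-sided bumps on the (at most two) unbounded components; (3) on one distinguished bounded component $(a_0,b_0)$ put a bump of class exactly $C^{k+1}$, not $C^{k+2}$, strictly positive inside and flat at $b_0$, with a $C^{k+1}$-but-not-$C^{k+2}$ singularity at $a_0$; (4) form $f_E(x)=\operatorname{sgn}(x)\sum_i\varepsilon_i g_i(|x|)$ with $\varepsilon_i\downarrow 0$ fast enough that $\sum_i\varepsilon_i\|g_i\|_{C^{k+1}}<\infty$, which makes $f_E$ odd and globally $C^{k+1}$; (5) verify the zero set: $f_E(x)=0$ iff $|x|\le\delta$ or $|x|\ge M$-outside-the-unbounded-supports or $x\in E$, which by construction is exactly $E\cup\{0\}$, because every bump is strictly positive on its own component and those components are precisely $\BR\setminus(E\cup\{0\})$ restricted to $|x|>\delta$; (6) note that away from $a_0$ the function is $C^\infty$, while at $a_0$ the distinguished bump forces $f_E\notin C^{k+2}$, so $f_E$ has exactly the claimed regularity. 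The routine analytic estimates in steps (2) and (4) — uniform convergence of the differentiated series — are standard and I would not belabour them; the one genuinely new idea is the single non-smooth bump of step (3), which is where the ``exactly $C^{k+1}$, not $C^{k+2}$'' assertion is earned.
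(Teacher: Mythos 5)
Your construction has a genuine flaw at the origin. The lemma demands $f_E(x)=0$ \emph{if and only if} $x\in E\cup\{0\}$, but you explicitly arrange $f_E\equiv 0$ on the whole interval $(-\delta,\delta)$ (``Near $x=0$ we need $f_E\equiv 0$ on $(-\delta,\delta)$ in fact''), and your step (5) concedes that the zero set contains $\{|x|\le\delta\}$. Since $0\notin E$ and $E$ is closed, $(-\delta,\delta)\setminus\{0\}$ is disjoint from $E\cup\{0\}$, so every point of it violates the ``only if'' direction. This is not cosmetic: in the application (Theorem~\ref{thm:remarkable}) the zeros of $f_E$ are exactly the periodic orbits, so an interval of spurious zeros would create an interval of spurious periodic orbits. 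The fix is easy and already implicit in your own machinery: the components of $\BR\setminus(E\cup\{0\})$ adjacent to the origin, namely $(0,e)$ and $(-e,0)$ with $e=\min\{|x|:x\in E\}$, must each carry a bump that is nonzero on the \emph{entire} open component (vanishing only at the endpoints $0$ and $e$), with signs chosen to keep $f_E$ odd. The paper avoids this trap by working from the outset with the complement of $E_0=E\cup\{0\}$, decomposing \emph{all} of $\BR\setminus E_0$ into open intervals and putting a (signed) bump, or a power of the distance function $d(\cdot,E_0)$, on every one of them.

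Two secondary points. First, your candidate $(x-a)^{k+1}\log(x-a)$ for the ``spoiler'' bump is not even $C^{k+1}$ (its $(k+1)$-st derivative contains an unbounded $\log(x-a)$ term); the $(x-a)_+^{k+1+\alpha}$ option, or the paper's choice $[d(x,E_0)]^{k+2}$, does the job. Second, your overall route differs from the paper's for finite $k$: the paper takes $g_E(x)=-\mathrm{sign}(x)[d(x,E_0)]^{k+2}$, which automatically has the right zero set and the right finite regularity at points of $E_0$, and only needs a local repair at the midpoints of the complementary intervals where the distance function has corners; you instead sum per-component bumps with decaying coefficients and inject the loss of $C^{k+2}$ regularity by hand. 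Your route can be made to work, and arguably makes the ``not $C^{k+2}$'' claim more transparent, but the convergence of $\sum_i\varepsilon_i\|g_i\|_{C^{k+1}}$ together with positivity on every component needs care when components accumulate on $E$ (tiny components force tiny $\varepsilon_i$, and you must still check that the sum vanishes to order $>k+1$ at accumulation points of $E$ so that the limit is genuinely $C^{k+1}$ there).
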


\begin{proof}
Given a symmetric compact set $E$ and $0\notin E$, let $E_0=E\cup\{0\}$ and $d(x,E_0)=\min_{y\in E_0} \abs{x-y}$. Define
\begin{equation}\label{symfun}
g_E(x)=-\textrm{sign}(x)[d(x,E_0)]^{k+2}.
\end{equation}
%where $H$ is the Heaviside function (\ref{eq:Heav}), here acting as the sign of $x$. 
Clearly $g_E$ is a continuous, odd function. Moreover, $g_E(x)=0$ iff $x\in E_0$. Since $E_0$ is closed its complement is open and hence is a countable union of disjoint open intervals. The function $g_E$ is $C^{k+1}$ except at the mid-points of each of these intervals. But $g_E$ can be modified in a neighbourhood of each of these mid-points so that the modification $f_E$ is $C^{k+1}$ there and still odd. This proves the first part of the result if $k<\infty$. 

If $k=\infty$ it is still the case  that the complement of $E_0$ is open and is thus a  union of disjoint open intervals. This union is countable (as each open interval contains a rational) and so we can label the intervals $J_i=(a_i-\delta_i,a_i+\delta_i)$, so that the centre points are $a_i$ and the length of $J_i$ is $2\delta_i$, $i=1,2, \dots$. Now let $\psi (x)$ denote the standard $C^\infty$ bump function on the unit interval:
\begin{equation}\label{bump}
\psi(x)=\begin{cases}\exp\left(-\frac{1}{1-x^2}\right)& \text{if $\abs{x}<1$}\\ 0 & \text{if $\abs{x}\ge 1$},
\end{cases}\end{equation}
 and define 
\begin{equation}\label{cinfinity}
f_E(x)=\sum_{i=1}^\infty -\textrm{sign}(a_i)\delta_i\psi \left(\frac{x-a_i}{\delta_i}\right).
\end{equation}
Then $f_E$ is $C^\infty$ and zero if and only if $x\in E_0$ ($0\in E_0$ by definition so $a_i\ne 0$). Finally, $f_E$ is an odd function, since $E_0$ is symmetric. 

\end{proof}

The separation of $x=0$ from $E$ in the statement of Lemma~\ref{fc} is because the interpretation of the points in $E$ for the sewed focus (they are on periodic orbits) is different from the sewed focus itself which has $x=0$.  

Hence we have the rather remarkable theorem.

\begin{thm}\label{thm:remarkable}Let $E$ be any symmetric compact subset of $\BR$ not containing the origin. Then there exists a $C^k_*$ Filippov system \eqref{basicmodel}, $1\le k\le \infty$, with an isolated type 3 singularity, for which $(x_0,0)$ lies on a periodic orbit if and only if $x_0\in E$.
\end{thm}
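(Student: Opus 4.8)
The plan is to combine Lemma~\ref{fc} with the explicit construction used in the proof of Theorem~\ref{thm:gensewedfocus}. Given a symmetric compact set $E\subset\BR$ with $0\notin E$, first invoke Lemma~\ref{fc} to obtain an odd $C^{k+1}$ function $f_E$ whose zero set is exactly $E\cup\{0\}$ (and which, for finite $k$, is genuinely only $C^{k+1}$, not $C^{k+2}$). The idea is then to feed $g:=f_E$ into the recipe \eqref{infbelow}--\eqref{infabove}: set $P^-\equiv-1$, $Q^-(x,y)=-2x$ in $y<0$, and in $y>0$ set $P^+\equiv 1$ and
\[
Q^+(x,y)=-2x+2kx^{2k-1}g(\textstyle\frac1x)-x^{2k-2}g'(\textstyle\frac1x),
\]
for a fixed integer $k\ge 2$ (chosen so that after the substitution $x\mapsto 1/x$ the prefactors $x^{2k-1}$, $x^{2k-2}$ kill all the blow-up and one loses only a bounded number of derivatives — since $g$ is $C^{k+1}$ this makes $Q^+$ of class $C^{k}$, hence the system is $C^k_*$). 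One must also confirm \eqref{sf1}--\eqref{sf2}: $P^\pm(0,0)=\pm1$ are nonzero of the right sign, and $Q^\pm(x,0)$ has the sign of $-x$ near the origin because the dominant term is $-2x$ and the correction terms are $o(x)$ as $x\to0$ (using that $x^{2k-1}g(1/x)$ and $x^{2k-2}g'(1/x)$ both vanish faster than linearly, which holds since $g$ and $g'$ are bounded near $0$ and $k\ge2$). So the origin is an isolated type~3 singularity.

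Next I would run the return-map computation exactly as in Theorem~\ref{thm:gensewedfocus}: solutions in $y<0$ lie on parabolas $y=-c^2+x^2$, solutions in $y>0$ through $(-x_0,0)$ lie on the curves \eqref{intcurves}, namely $y(x)=-x^2+x^{2k}g(1/x)+x_0^2+x_0^{2k}g(1/x_0)$, using that $g$ is odd. Evaluating at $x=x_0$ gives $y(x_0)=2x_0^{2k}g(1/x_0)$, and — provided the orbit segment stays in $y>0$ on the open interval $(-x_0,x_0)$, which holds for $x_0$ small since the leading behaviour of $y(x)$ on this interval is $x_0^2-x^2>0$ plus lower-order terms — the point $(x_0,0)$ lies on a periodic orbit if and only if $y(x_0)=0$, i.e.\ if and only if $g(1/x_0)=0$, i.e.\ if and only if $1/x_0\in E\cup\{0\}$, i.e.\ if and only if $1/x_0\in E$ (since $x_0\ne0$).

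This last equivalence gives periodic orbits through $(x_0,0)$ precisely for $x_0\in\{1/e:e\in E\}$, which is not quite the statement we want. To fix the indexing I would apply the lemma not to $E$ itself but to the reciprocal set $E^{-1}:=\{1/e:e\in E\}$, observing that $E^{-1}$ is again symmetric ($E=-E\Rightarrow E^{-1}=-E^{-1}$), compact (it is the continuous image of the compact set $E$ under $x\mapsto1/x$, which is continuous away from $0$, and $0\notin E$), and does not contain $0$; set $g:=f_{E^{-1}}$. Then $g(1/x_0)=0\iff 1/x_0\in E^{-1}\iff x_0\in E$, as required, and restricting to a small enough neighbourhood $U$ of the origin — small enough that the "orbit stays in $y>0$" caveat holds and $E$ (being a compact set bounded away from $0$) has the property that $x_0\in E$ forces $x_0$ into a fixed annulus — makes everything local and $E\cap U=E$ after shrinking.

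The main obstacle I anticipate is the bookkeeping around the open-interval positivity condition "$y(x)>0$ for all $x\in(-x_0,x_0)$", which is needed to guarantee that the candidate closed curve is actually a periodic orbit of the flow (and not, say, cut by a tangency or forced back across $\Sigma$ prematurely), together with making sure the differentiability count is honest — one must verify that $x^{2k-2}g'(1/x)\to0$ and is $C^k$ at $x=0$ given only $g\in C^{k+1}$, and that no extra zeros of $y(x_0)$ are introduced by the $x_0^{2k}$ prefactor (it doesn't: $x_0^{2k}\ne0$ for $x_0\ne0$). Everything else is a direct transcription of the already-proven Theorem~\ref{thm:gensewedfocus} construction with $\sin2\pi x$ replaced by the function supplied by Lemma~\ref{fc}.
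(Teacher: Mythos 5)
Your high-level strategy (feed the function from Lemma~\ref{fc} into an explicitly integrable piecewise system) is the same as the paper's, and the reciprocal-set trick for $E^{-1}$ is handled correctly. But the specific recipe you reuse from Theorem~\ref{thm:gensewedfocus} --- $Q^+(x,y)=-2x+2kx^{2k-1}g(\frac1x)-x^{2k-2}g'(\frac1x)$ with $g=f_{E^{-1}}$ --- has a genuine gap. The terms you must control involve $g$ and its derivatives evaluated at $1/x$, and as $x\to0$ this probes the behaviour of $g$ \emph{at infinity}, not near $0$; your justification that the corrections are $o(x)$ ``since $g$ and $g'$ are bounded near $0$'' addresses the wrong regime. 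Lemma~\ref{fc} gives no control at infinity: the construction $g_E(x)=-\mathrm{sign}(x)[d(x,E_0)]^{k+2}$ grows like $\abs{x}^{k+2}$, so for $k=2$ the term $x^{2k-2}g'(1/x)\sim x^{2}\cdot x^{-(k+1)}=x^{-1}$ already blows up and $Q^+$ is not even continuous at the origin, let alone $C^k$. You cannot repair this by cutting $g$ off outside a ball containing $E^{-1}$, because then $g(1/x_0)=0$ for all small $x_0$, producing spurious periodic orbits accumulating at the origin and destroying the ``only if'' direction. The paper sidesteps all of this by using a different recipe, $Q^+(x,y)=-2x+x^2f_E'(x)+2xf_E(x)$, which involves $f_E$ evaluated at $x$ itself (no $1/x$), is manifestly $C^k$ when $f_E$ is $C^{k+1}$, and integrates to $y(x)=-x^2+x^2f_E(x)+x_0^2+x_0^2f_E(x_0)$ so that $y(x_0)=2x_0^2f_E(x_0)$ vanishes exactly on $E$ with no reindexing needed.

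The second gap is the first-return condition. You note that one needs $y(x)>0$ on $(-x_0,x_0)$ and dismiss it ``for $x_0$ small,'' but the points $x_0\in E$ are bounded \emph{away} from the origin (E is compact and $0\notin E$), so smallness is unavailable and the correction term is not a perturbation of $-x^2+x_0^2$ on the whole interval. The paper spends a nontrivial part of its proof here: for $x\in E_0$ with $\abs{x}<x_0$ one gets $y(x)=x_0^2-x^2>0$ exactly because $f_E(x)=0$ there, so any premature root $x^\star$ must lie in a complementary interval $J_i$; positivity of $y$ at the endpoints of $J_i$ would force an even number of roots, hence a tangency or a sign change of $Q^\pm$ inside $J_i$, which is excluded. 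Without an argument of this kind (and without fixing the smoothness issue above), your proposal does not close.
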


\emph{Proof:} Choose $1\le k\le\infty$ and let $f_E(x)$ be the odd $C^{k+1}$ or $C^\infty$ function of Lemma~\ref{fc}. Define the $C^k_*$  Filippov system using \eqref{infbelow} in $y<0$ and 
\begin{equation}\label{abovenew}
 P^+(x,y)=1, \quad Q^+(x,y)=-2x+x^2f_E^\prime (x)+2x f_E(x)
\end{equation}
in $y>0$, which satisfies \eqref{Type3} since $f_E^\prime$ is continuous.   Furthermore, as $f_E^{\prime\prime}$ is continuous and $f_E(0)=0$, $Q^{\pm}_x(x,y)<0$ in a neighbourhood of the origin. Hence in this neighbourhood $Q^\pm(x,y)>0$ for $x<0$ and $Q^\pm(x,y)<0$ for $x>0$, which guarantees that there is no sliding in the neighbourhood of the origin. Thus solutions in $y>0$ through $(-x_0,0)$ lie on sets
\begin{equation}\label{zeroeq}
y(x)=-x^2+x^2f_E(x)+x_0^2+x_0^2f_E(x_0)
\end{equation}
by integrating $\ovdiff{y}{x}=\ovfrac{Q^+}{P^+}$, using \eqref{abovenew}.

The flow in $y<0$ implies that an orbit through $(x_0,0)$, $x_0>0$, next intersects the switching manifold at $(-x_0,0)$ and hence  a periodic orbit through $(\pm x_0,0)$ exists iff the flow in $y>0$ takes $(-x_0,0)$ to $(x_0,0)$, i.e.\ that $\sigma^+(-x_0)=x_0$.

First suppose that $x_0>0$ and $(x_0,0)$ lies on a periodic orbit. Then following the flow in $y<0$, $(-x_0,0)$ lies on the same periodic orbit, and hence $y(\pm x_0)=0$. Then  \eqref{zeroeq} gives
\[
y(x_0)=2x_0^2f_E(x_0)=0,
\]
and so $x_0\in E$.

To prove sufficiency, suppose that $x_0>0$ and $x_0\in E$. Since $E$ is symmetric $-x_0\in E$ too, and so $f_E(\pm x_0)=0$. Then, recalling that $f_E$ is odd, \eqref{zeroeq} gives
\[
y(x_0)=2x_0^2f_E(x_0)=0,
\]
so an orbit through $(-x_0,0)$ crosses the switching manifold at $(x_0,0)$. Suppose this is not the first crossing, i.e.\ that there exists at least one root  $y(x^\star)=0$ with $x^\star\in(-x_0,x_0)$.  For all $x\in E_0$, $\abs{x}<x_0$,  
\begin{equation}
  y(x)=-x^2+x_0^2+x^2f_E(x)=-x^2+x_0^2>0,
\label{zeroeqinE}
\end{equation}
so $x^\star$ must be in the complement of $E_0$, in one of the intervals $J_i$, see Lemma~\ref{fc}.  Equation \eqref{zeroeqinE} also implies that $y(x)>0$ at the endpoints of all these intervals; since $y(x)$ is continuous, and has no tangencies (as $Q^\pm(x,y)\neq0$ for $x\neq0$), there must be an even number of roots of $y(x)$ in the interval $J_i$ containing $y^\star$.  But this contradicts the fact that $Q^\pm(x,y)$ do not change sign on any interval $J_i$, as noted above. So there is no root $x^\star\in(-x_0,x_0)$, and the trajectory in $y>0$ through $(-x_0,0)$ next crosses the switching manifold at $(x_0,0)$, and hence is a periodic orbit for any $x_0\in E$.

Finally, let $x_0>0$ with $x_0\notin E$. Again, by symmetry of $E$, $-x_0\notin E$ and this time \eqref{zeroeq} gives
\[
y(x_0)=2x_0^2f_E(x_0)\neq0,
\]
so the trajectory through $(-x_0,0)$ cannot be a periodic orbit.

%Now suppose that $x_0\in E_0=E\cup\{0\}$ and hence $x_0^3f_E(x_0)=0$. If $x_0\ne 0$ then $(\pm x_0,0)$ is the intersection of a periodic orbit with the switching manifold provided that $x_0$ is the first intersection of the integral curve in $y>0$ with $y=0$, i.e. if there are no zeros of (\ref{zeroeq}) in $0<x<x_0$ (as $\sigma^+(x)>0$ if $x<0$ we do not need to consider negative $x$).  Since $f_E(x_0)=0$, (\ref{zeroeq}) implies that at a zero, $x\ne 0$,
%\begin{equation}\label{zeroeq1}
%-x^2+x^2f_E(x)+x_0^2=0
%\end{equation}
%and since $f_E$ is $C^k$ ($k\ge 1$) the Mean Value Theorem holds and so near a zero of $f_E$, $f_E(x)=(x-x_0)h(x)$ where $h$ is continuous and so we are seeking a solution to
%\begin{equation}\label{zeroeq2}
%-x^2+x^2f_E(x)+x_0^2=-(x-x_0)\left(x+x_0-x^2h(x)\right)=0 
%\end{equation}
%and the second bracketed term cannot vanish for sufficiently small $x>0$.
%
%If $k=\infty$ then the same argument using (\ref{cinfinity}) in (\ref{abovenew}) completes the proof.

%\newline
\rightline\qed 

In Theorem~\ref{thm:gensewedfocus}, we have shown that there are uncountably many topological classes of the sewed centre-focus, a type of non-analytic isolated sewed focus singularity of a  $C^k_*$ system \eqref{basicmodel}, where $1\le k \le \infty$. This result was generalized in Theorem~\ref{thm:remarkable}.

\section{Conclusion}\label{sec:conclusion}
In this paper we have analyzed the sewed focus, a singularity of planar piecewise smooth systems. Although some analysis of this case is presented by Filippov \cite{f88}, we believe that many of our results are new. 
%In particular we have shown that the singularity can be reached in finite time for $C^k_*$ systems. We have also given an example with infinitely many stable periodic orbits and shown how to generalize this to create systems with periodic orbits intersecting a symmetric compact subset of the switching manifold.
In the case of non-analytic focus-like behaviour we show that the approach to the singularity can be in finite time, forwards or backwards as appropriate. %(in contrast to the infinite time approach proved by Filippov for the analytic case). 
More generally, for the non-analytic sewed centre-focus, we show that there are uncountably many different topological types of local dynamics, including cases with  infinitely many stable periodic orbits.

The proof of our results uses a simple construction. By considering particular cases in greater detail it is then possible to define $C^k_*$ systems, $k\in \Z^+ \cup \{\infty\}$, which have \emph{infinitely many stable periodic orbits}. This is another case of `infinitely many sinks' arising in piecewise smooth systems, although the mechanism is totally different from the examples for the border collision normal form \cite{SimpsonInf}. See also recent work \cite{Huzak2022}, which shows that the number of limit cycles for regularized piecewise polynomial systems is unbounded.  

Hence Hilbert's 16$^{th}$ Problem, concerning the number of stable periodic orbits of planar differential equations, is not relevant for the $C^k_*$ sewed focus. This problem has attracted considerable recent attention in the case of piecewise {\it linear} systems with a linear switching manifold \cite{Spain, Spain2}. In contrast, our paper answers the question for more general piecewise smooth systems.

The sewed focus is one of the earliest examples of type 3 singularities to be considered in the applied literature, with examples from the 1930s being included in Andronov et al.\ \cite[p.~511]{AKV}. These examples are $C^\omega_*$, so our more interesting results described here do not apply. But the effects described may arise in examples with more complicated switches which change higher derivatives of the flow in either $y>0$ or $y<0$ or both, as shown in section~\ref{section:finitetime}.

\medskip
\emph{Acknowledgements:} We are grateful to David Chillingworth (University of Southampton) for his insights on the analyticity of return maps, and to James Montaldi and Jonathan Fraser (University of Manchester) for helpful discussions, especially concerning Lemma~\ref{fc}. We are extremely grateful to an anonymous reviewer who supplied us with example \eqref{reviewerexample}, to complete the proof of Theorem~\ref{thm:finitetime}. SJH would like to thank the Hungarian Academy of Sciences for the award of a Distinguished Guest Scientist Fellowship, during which part of this paper was written.

\end{document}